\newtheorem{thm}{Theorem}[section]
\newtheorem{lem}[thm]{Lemma}
\newtheorem{cor}[thm]{Corollary}
\newtheorem{rem} [thm]{Remark}
\theoremstyle{definition}
\newtheorem{que}[thm]{Question}
\DeclareMathOperator{\diam}{diam}
\DeclareMathOperator{\Int}{Int}
\DeclareMathOperator{\id}{id}
\newcommand{\bS}{\mathbb{S}}
\newcommand{\eps}{\varepsilon}
\newcommand{\A}{\mathbb{A}}
\begin{document}

\title[A Note on Homeo-Product-Minimality]{A Note on Homeo-Product-Minimality}

\author[J. P. Boroński]{J. P. Boroński}\address[J. P. Boroński]{National Supercomputing Centre IT4Innovations, University of Ostrava,
		IRAFM,
		30. dubna 22, 70103 Ostrava,
		Czech Republic}
\email{jan.boronski@osu.cz}
\author[M. Fory\'s-Krawiec]{Magdalena Fory\'s-Krawiec}\address[M. Fory\'s-Krawiec]{AGH University of Science and Technology, Faculty of Applied
	Mathematics, al. Mickiewicza 30, 30-059 Krak\'ow, Poland}
\email{maforys@agh.edu.pl}
\author[P. Oprocha]{Piotr Oprocha}\address[P. Oprocha]{AGH University of Science and Technology, Faculty of Applied
	Mathematics, al. Mickiewicza 30, 30-059 Krak\'ow, Poland  -- and --  National Supercomputing Centre IT4Innovations, University of Ostrava,
		IRAFM,
		30. dubna 22, 70103 Ostrava,
		Czech Republic}
\email{oprocha@agh.edu.pl}

\maketitle	

\begin{abstract}
A compact space $Y$ is called homeo-product-minimal if given any minimal system $(X,T)$, it admits a homeomorphism $S:Y\to Y$, such that the product system $(X\times Y,T\times S)$ is minimal. We show that a large class of cofrontiers is homeo-product-minimal. This class contains R. H. Bing's pseudo-circle, answering a question of Dirb\'{a}k, Snoha and \v Spitalsk\'{y} from [{\it Minimal direct products}, Trans. Amer. Math. Soc.
375 (2022)].
\end{abstract}

\section{Introduction}
The present paper concerns the notion of homeo-product-minimality, recently introduced by Dirb\'{a}k, Snoha and \v Spitalsk\'{y} in \cite{DSS}. A compact space $Y$ is called \emph{homeo-product-minimal} if given any minimal, metric but not necessarily compact system $(X,T)$, $Y$ admits a homeomorphism $S:Y\to Y$, such that the product system $(X\times Y,T\times S)$ is minimal. The aforementioned authors showed (\cite[Theorem B]{DSS}) that the following spaces are homeo-product-minimal: 
\begin{enumerate}
    \item every compact connected metrizable abelian group,
    \item every space of the form $Y\times Z$, where $Y$ is a nondegenerate homeo-product-minimal space and $Z$ is a compact metrizable space admitting a minimal action of an arc-wise connected topological group,
    \item every quotient space $(\Gamma\times Z)/\Lambda$, obtained from $\Gamma\times Z$ by applying the diagonal action of $\Lambda$, where $\Gamma$ is an infinite compact connected metrizable abelian group, $\Lambda$ is a finite subgroup of $\Gamma$ and $Z$ is a compact connected (not necessary abelian) metrizable group, on which the group $\Lambda$ acts by automorphism,
    \item the Klein bottle,
    \item the Cantor space,
    \item the Sierpi\'nski curve on the $2$-torus and the Sierpi\'nski curve on the Klein bottle,
    \item every compact metrizable space $Y$ admitting a minimal continuous flow whose centralizer in the group of homeomorphisms of $Y$ acts transitively on $Y$  in the algebraic sense,
    \item every compact connected manifold $Y$ without boundary admitting a free action of a nontrivial compact connected Lie group,
    \item all odd-dimensional spheres and compact connected Lie groups,
    \item every cantoroid.
\end{enumerate}
The introduction of the notion of homeo-product-minimality was motivated by the fact, that there exist minimal spaces, whose Cartesian powers do not admit minimal homeomorphisms \cite{BCO18}. 

Among the open questions raised by the authors in \cite{DSS}, was the one asking if R. H. Bing's pseudo-circle is homeo-product-minimal \cite[Problem 4]{DSS}. The pseudo-circle is a fractal-like object, arising both in topology and dynamics. For example, $\mathbb{R}^2\setminus\{(0,0)\}$ admits a continuous decomposition into pseudo-circles, whose decomposition space is homeomorphic with $\mathbb{R}$ \cite{Brown} (see also \cite{BCJ})). In dynamics, it appears in rotation theory \cite{BO}, as well as smooth \cite{KennedyYorke1}, \cite{KennedyYorke2}, \cite{KennedyYorke3}, \cite{Hn}, and complex dynamics \cite{Cheritat}. The question concerning homeo-product-minimality of the pseudo-circle seems well motivated, since the pseudo-circle admits minimal homeomorphisms \cite{Hn} and minimal noninvertible maps \cite{BKLO}. It also shares many properties with the circle, admitting homeomorphisms that are well-approximated by rigid rotations of the annulus. However, its homeomorphism group is far from the one of the circle. For example, the action of the group on the pseudo-circle has uncountably many orbits \cite{KR}, contains minimal homeomorphisms that are weakly mixing \cite{Hn} (unlike the circle), and does not contain any connected nondegenerate compact subsets \cite{Lewis}. Nonetheless, in the present paper we show that the pseudo-circle is indeed homeo-product-minimal, a property it shares with a class of planar cofrontiers, the so-called HAK attractors \cite{BCO}. In the next section we recall the construction of these objects, which we shall use in Section 3 to answer the aforementioned question. As noted by Handel in \cite{Hn}, the construction is flexible enough, to allow the introduction of crookedness, which guarantees that the resulting HAK attractor is a pseudo-circle, giving us the following result, which in turn provides an affirmative answer to the \cite[Problem 4]{DSS} mentioned above.
\begin{thm}\label{main}
The pseudo-circle is homeo-product-minimal. 
\end{thm}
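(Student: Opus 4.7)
The plan is to exploit the flexibility of the HAK construction recalled in Section 2, which realises the pseudo-circle $Y$ as a cofrontier in the annulus $\A=\bS^1\times[0,1]$ together with a minimal homeomorphism $S\colon Y\to Y$. This $S$ is produced as a uniform limit of conjugates of an irrational rotation $R_\alpha$ of $\A$ by diffeomorphisms isotopic to the identity and supported on progressively thinner annular regions encoding crookedness. Crucially, the rotation number $\alpha$ can be prescribed freely, and the resulting $S$ is an almost one-to-one extension of $R_\alpha\colon\bS^1\to\bS^1$ via the semiconjugacy $\pi\colon Y\to\bS^1$ induced by the circle projection.

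Given an arbitrary minimal system $(X,T)$, I would first select $\alpha$ so that $(X\times\bS^1,T\times R_\alpha)$ is minimal. Such an $\alpha$ exists by a Baire category argument on the Kronecker factor of $(X,T)$: the set of rotation numbers that coincide with an eigenvalue of $(X,T)$ is meager in $\bS^1$, and any $\alpha$ outside this set yields a minimal product with the circle rotation. Fix such an $\alpha$ and run the HAK construction with this rotation number to obtain the corresponding $(Y,S)$.

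To prove minimality of $(X\times Y,T\times S)$, consider a nonempty closed $(T\times S)$-invariant set $\m\subseteq X\times Y$. Its image under $\id_X\times\pi$ is closed and $(T\times R_\alpha)$-invariant, so by our choice of $\alpha$ it equals $X\times\bS^1$; hence $\m$ meets every fibre $\{x\}\times\pi^{-1}(\theta)$. For $\theta$ in a residual subset of $\bS^1$ the preimage $\pi^{-1}(\theta)$ is a single point, so there $\{x\}\times\pi^{-1}(\theta)\subseteq\m$ for every $x\in X$. To promote this from a residual set of fibres to all of $X\times Y$, I would use that each finite-stage approximant $S_n$ to $S$ is by construction topologically conjugate on $\A$ to $R_\alpha$, so $T\times S_n$ is minimal on $X\times\A$ for every $n$. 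Taking the nested intersection of the corresponding invariant sub-annuli whose intersection is $Y$, and using uniform convergence $S_n\to S$ together with the cofrontier structure of $Y$, one forces $\m=X\times Y$.

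The main obstacle I anticipate is precisely this lifting step: almost one-to-one extensions do not preserve product minimality in general, so the argument cannot rest on the abstract factor map $\pi$ alone and must exploit the specific geometry of the HAK construction. Concretely, the technical heart is to verify that the sequence of perturbing diffeomorphisms $h_k$ can be chosen to fulfil, simultaneously across all scales, the crookedness conditions guaranteeing that $Y$ is a pseudo-circle (Handel's flexibility \cite{Hn}) and sufficiently tight uniform estimates so that the minimality of $T\times R_\alpha$ on the annulus transfers, in the limit, to $T\times S$ on the cofrontier. Once this compatibility is established, Theorem~\ref{main} will in fact follow from the more general statement about HAK cofrontiers alluded to in the introduction.
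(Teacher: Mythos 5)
Your overall strategy --- run the HAK construction with a single rotation number $\alpha$ for which $T\times R_\alpha$ is minimal, and then transfer minimality from $X\times\bS^1$ to $X\times Y$ through the circle projection --- does not match how the construction actually works, and the step you yourself flag as the ``main obstacle'' is a genuine gap rather than a technicality. First, the circle projection does not induce a semiconjugacy from $S$ onto $R_\alpha$: the conjugating homeomorphisms $f_n$ in the HAK scheme destroy the fibration of the annulus by circles, and indeed Handel-type minimal homeomorphisms of the pseudo-circle can be weakly mixing, which excludes any nonconstant continuous factor map onto an irrational rotation. Hence the image $(\id_X\times\pi)(\m)$ need not be $(T\times R_\alpha)$-invariant, and the ``residual set of singleton fibres'' has no meaning for this projection. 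Second, the finite-stage approximants are not conjugate to $R_\alpha$ on $\A$: in the construction each $H_n$ is conjugate on the invariant sub-annulus $A_n$ to a \emph{rational} rotation $\id_{[0,1]}\times R_{p_n/q_n}$, which is periodic there and not minimal on the annulus, so your claim that $T\times S_n$ is minimal on $X\times\A$ fails. Finally, even granting an almost one-to-one extension, you correctly observe that such extensions do not preserve product minimality, and the sentence ``one forces $\m=X\times Y$'' is precisely the missing argument.

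The paper circumvents all of this by never fixing a single $\alpha$ and never invoking a factor map. At stage $n$ one picks an irrational $\alpha_n$ (a full-measure set of choices works, by Weyl's theorem) with $T\times R_{\alpha_n\cdot p_{n-1}/q_{n-1}}$ minimal, and then replaces it by a nearby rational $\frac{p_n}{q_n}$ so that a \emph{finite-time, quantitative} trace of minimality survives: for every $s,t$ some iterate $j<K_n$ brings $(T\times R_{p_n/q_n})^{j}(x_0,s)$ within $\eps_n$ of $(x_0,t)$ (Lemmas \ref{lem: FG}--\ref{lem:TxRpq}). This finite-time property is robust under the $C^0$-small perturbations $g_n$ and passes to the limit $H$, yielding $\omega_{T\times H}(x_0,s)\supset\{x_0\}\times\mathcal{C}$ for a fixed $x_0$; minimality of $T\times H|_{\mathcal{C}}$ on $X\times\mathcal{C}$ then follows from a short standard $\omega$-limit argument. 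To salvage your approach you would have to replace the semiconjugacy step by uniform finite-time estimates of this kind at every stage of the construction, which is essentially what the paper does.
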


\section{Preliminaries}\label{sec2}
 A \emph{dynamical system} is a pair $(X,T)$, where $X$ is metrizable and $T:X\to X$ is a continuous map. In particular we do not require the usual assumption that $X$ is a compact space. We adapt this more general approach from \cite{DSS} since definition of homeo-product-minimality require it. Dynamical system is \emph{minimal} if there is no proper nonempty, closed and $T$-invariant subset $M\subset X$, in which case map $T$ is called minimal. Note that a system is minimal if and only if \emph{$\omega$-limit set of a point $x \in X$}, denoted $\omega_T(x)$, is the whole space $X$ for all $x \in X$, where $\omega_T(x)$ is the set of all limit points of the trajectory $\{T^n(x)\}_{n \geq 0}$. Recall that a compact space $Y$ is called \emph{homeo-product-minimal} if given any minimal, metric but not necessarily compact system $(X,T)$, $Y$ admits a homeomorphism $S:Y\to Y$, such that the product system $(X\times Y,T\times S)$ is minimal. 

By a \emph{continuum} we mean compact, connected metric space and any subset of a continuum which is a continuum itself is referred to as \emph{subcontinuum}. A continuum is \emph{indecomposable} if it cannot be presented as the union of two proper subcontinua and \emph{hereditarily indecomposable} if every compact connected subset of a continuum is indecomposable. The main object of our interest is the pseudo-circle constructed by Bing in \cite{B2}, which is a~hereditarily indecomposable continuum.

Let $\bS^1$ denote the unit circle. For any $\theta \in \mathbb{R}$ and the circle $\bS^1$ we denote by $R_{\theta}:\bS^1 \ni x\mapsto x+\theta\mod 1 \in  \bS^1$ the rotation map. It is clear that if we fix some $\alpha \in \mathbb{R}\setminus\mathbb{Q}$ and a sequence $\{q_k\}_{k\geq 0}\subset\mathbb{Q}$ converging to $\alpha$, then the sequence of rotations $\{R_{q_k}\}_{k\geq 0}$ converges to $R_{\alpha}$ in the space of self homeomorphisms of the circle.

Take the annulus $\A = [0,1]\times \mathbb{R}/\mathbb{Z}$ and by $\pi_i$, $i=1,2$ denote the projection of $\A$ onto its $i$-th factor; that is for any $t = (t_1,t_2) \in \A$ we have $\pi_1(t) = t_1 \in [0,1]$ and $\pi_2(t)=t_2\in \mathbb{R}/\mathbb{Z}$. 

For any compact metric space $(X,d)$ and continuous maps $F,G:X\to X$ we define the following metric:
	$$
	\rho(F,G) = \sup_{x \in X}d(F(x),G(x)).
	$$
	 
The proof of our main Theorem \ref{thm:pseudocirc} is motivated by the construction of so-called HAK homeomorphism (Handel-Anosov-Katok homeomorphism) of annulus $\A$. We say that a homeomorphism $H\colon\A\to \A$ is a \emph{HAK homeomorphism} if $H$ is the uniform limit (in terms of the metric $\rho$) of a sequence of particular annulus homeomorphisms $\{H_n\}_{n\geq 0}$. Below we recall the main ideas of the construction of sequence $\{H_n\}_{n \geq 0}$ after the authors of \cite{BCO}. For more details we refer the reader to \cite{Hn} and \cite{BCO}.

{\bf Construction of HAK homeomorphism:}
\begin{enumerate}
	\item Start with a decreasing sequence of closed annuli $\{A_n\}_{n \in \mathbb{N}}$ assuming $A_0=\A$, such that $A_1\subset \Int \A$. For every $n \in \mathbb{N}$ let $f_n:A_n\to \A$ be a homeomorphism with the property that $\pi_1(f(c))\leq \frac{\eps_n}{2}$ for $c \in A_n$ and $\eps_n$ small enough. In fact we choose $\eps_n$ for $n \in \mathbb{N}$ in such a way that $\sum_{i=0}^{\infty}\eps_i$ is convergent (see section 2.1.1 in \cite{BCO}).  
	\item For each $n$ we take the rational rotation $R_n:\A\to\A$ of period $p_n$ such that there exists a rectangle $D_n = [0,1]\times [0,\alpha_n]$ with the property that $R^i_n(D_n), R^j_n(D_n)$ are either equal or have disjoint interiors, $\A = \bigcup_{i \in \mathbb{Z}}R^i_n(D_n)$ and $\alpha_n<\delta_n$, where $\delta_n$ is chosen by the uniform continuiuty of $f_n^{-1}$ for $\frac{\eps_n}{4}$. We assume that $p_{n+1}<p_n$ for any $n \in \mathbb{N}$. 
	\item For each $n \in \mathbb{N}$ we define a homeomorphism $g_n:\A\to\A$ which is $\eps_n$-close to identity and:
	$$
	g_n|_{A_n} = f_n^{-1}\circ R_n \circ f_n,
	$$
	while $g_n|_{\A\setminus A_{n-1}}=\id$. 
	\item For every $n \in \mathbb{N}$ we define the homeomorphism $H_n:\A\to\A $ as follows:
	$$
	H_n = g_n\circ g_{n-1}\circ \dots\circ g_1.
	$$
	\end{enumerate}
The approximation scheme presented in \cite{BCO} assures also that $H_n(A_{n+1})\subset \Int A_{n+1}$ and $\rho(H^i_n, H^i_{n+1})<\eps_n$ for $i=1\dots,mp_n$ for some $m\geq 1$. As a result, the authors obtain that $\{H_n\}_{n \in \mathbb{N}}$ is a Cauchy sequence,  so it converges to some homeomorphism $H$, together with $H$-invariance of each $A_n.$ Denoting $\mathcal{C} = \bigcap_{n \in \mathbb{N}}A_n$ we have that $\mathcal{C}$ is closed and $H$-invariant as well. In fact it is an attracting set of $H$ and we will refer to it as \emph{HAK attractor} of the \emph{HAK homeomorphism} $H$. 

\section{Homeo-product-minimality}

Throughout this section we always assume that $(X,T)$ is a minimal dynamical system, where $X$ is metric, but not necessarily compact.

The following Lemma \ref{lem: FG} is standard. We state it without proof.
\begin{lem}\label{lem: FG}
	Let $F\colon Z\to Z$ be a continuous map on a compact metric space $Z$. Then for any continuous map $G\colon Z\to Z$, $K>0$ and $\eps>0$ there exists $\gamma>0$ such that if $\rho(F,G)<\gamma$ then $\rho(F^j,G^j)<\eps$ for $j=0,\dots K$.
\end{lem}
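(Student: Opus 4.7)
The plan is to prove the statement by induction on $j$, the sole ingredient being uniform continuity of $F$ on the compact metric space $Z$. The base case $j=0$ is trivial since $F^0=G^0=\id_Z$, so $\rho(F^0,G^0)=0<\eps$ for any $\eps>0$.

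For the inductive step from $j-1$ to $j$, I would apply the triangle inequality
\[
d(F^{j}(x),G^{j}(x)) \le d\bigl(F(F^{j-1}(x)),F(G^{j-1}(x))\bigr) + d\bigl(F(G^{j-1}(x)),G(G^{j-1}(x))\bigr).
\]
The second summand is bounded by $\rho(F,G)$. For the first, uniform continuity of $F$ supplies, for any prescribed tolerance, a $\delta>0$ such that $d(x,y)<\delta$ implies $d(F(x),F(y))<\eps/2$. Applying the inductive hypothesis with target $\min(\delta,\eps/2)$ in place of $\eps$ yields $\gamma'>0$ such that $\rho(F,G)<\gamma'$ forces $\rho(F^{i},G^{i})<\min(\delta,\eps/2)$ for every $i\le j-1$. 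Setting $\gamma:=\min(\gamma',\eps/2)$, the displayed inequality gives $\rho(F^{j},G^{j})<\eps/2+\eps/2=\eps$, as required.

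To obtain a single $\gamma$ that works for every $j\in\{0,\ldots,K\}$ simultaneously, I would run the argument for each $j$ in turn and take the minimum of the finitely many $\gamma$'s produced. There is no real obstacle here; the authors omit the proof precisely because the argument is routine compactness plus uniform-continuity bookkeeping. The only mild point of care is ensuring that $\gamma$ is chosen uniformly for all iterates up to $K$, rather than only for the maximal index, and this is settled by the finite minimum.
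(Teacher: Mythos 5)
Your argument is correct and is precisely the standard induction-plus-uniform-continuity proof; the paper itself states Lemma \ref{lem: FG} without proof, explicitly calling it standard, so there is nothing to compare against and your write-up fills that gap adequately. (One small remark: the final "take the minimum over $j$" step is not even needed, since your inductive choice $\gamma\le\gamma'$ already controls all iterates $i\le j$ simultaneously.)
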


\begin{rem}\label{rem:alpha}
By the Weyl's theorem (see for example \cite{KN}) we have that for any sequence $\{x_n\}_{n \in \mathbb{N}}$ of distinct integers the sequence $\{\alpha x_n\}_{n \in \mathbb{N}}$ is uniformly distributed $\mod 1$ for almost all $\alpha \in \mathbb{R}$. Using this result and repeating the reasoning presented in the proof of \cite[Proposition 1]{KST} we have that the set of $\alpha \in \mathbb{R}\setminus\mathbb{Q}$ such that $T\times R_{\alpha}$ is minimal is of full Lebesgue measure, where $(X,T)$ is a minimal dynamical system and $R_{\alpha}:\bS^1\to \bS^1$ is the irrational rotation of the circle. 
\end{rem}

The following fact is a standard consequence of minimality. We present its proof for completeness.

\begin{lem}\label{lem:circle}

Fix any $x_0\in X$. Let $R_{\alpha}:\bS^1\to \bS^1$ be a rotation on the circle $\bS^1$, with $\alpha \in \mathbb{R}\setminus \mathbb{Q}$ such that $T\times R_{\alpha}$ is minimal. For any $\eps>0$ there exists $K >0$ such that for every $t, s \in \bS^1$ there is $j<K$ such that:
	$$
	d\left((x_0,t), (T\times R_{\alpha})^j(x_0,s))\right)<\frac{\eps}{2}.
	$$
\end{lem}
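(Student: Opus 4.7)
The plan is to decouple the two coordinates of the product system. Applying $(T \times R_\alpha)^j$ to $(x_0, s)$ yields $(T^j x_0, s + j\alpha)$, so the desired inequality splits into two simultaneous conditions: $T^j x_0$ should lie close to $x_0$ in $X$, and $j\alpha$ should lie close to $u := t - s$ in $\bS^1$. In particular, the roles of $s$ and $t$ enter only through their difference $u$, so it suffices to find, uniformly in $u \in \bS^1$, an index $j_u < K$ realizing both approximations to within $\eps/8$, say.

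Existence of each individual $j_u$ comes directly from the minimality hypothesis on $T \times R_\alpha$: the orbit of $(x_0, 0)$, which is precisely $\{(T^j x_0, j\alpha) : j \geq 0\}$, is dense in $X \times \bS^1$, hence enters the open set $B_{\eps/8}(x_0) \times B_{\eps/8}(u) \subset X \times \bS^1$ for some $j_u$. To upgrade this pointwise statement into the required uniform bound I would use compactness of the circle: cover $\bS^1$ by finitely many balls $B_{\eps/8}(u_1), \ldots, B_{\eps/8}(u_N)$, set $K := 1 + \max_{1 \leq i \leq N} j_{u_i}$, and, given arbitrary $t, s$, choose the index $i$ with $t - s \in B_{\eps/8}(u_i)$ and take $j := j_{u_i} < K$. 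A routine triangle-inequality bookkeeping on $\bS^1$ then upgrades the two $\eps/8$-approximations to a total error below $\eps/2$.

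The only point one has to watch is that $X$ is not assumed compact, so minimality supplies merely density of orbits and not, for instance, syndeticity of visit times in $X$. This is not a genuine obstacle here, because the argument only uses density to select one index per ball in the finite cover of $\bS^1$, and all the required uniformity is extracted from the compactness of the circle factor. I expect the main expositional care to lie simply in keeping the $\eps/8$ constants consistent between the two coordinates.
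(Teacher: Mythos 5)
Your proposal is correct, and it follows the same basic mechanism as the paper's proof: use minimality (density of a forward orbit, indeed $\omega$-limit set equal to the whole space) to hit each member of a finite cover of a compact circle factor, then take $K$ to be the maximum of the finitely many return times. The difference lies in how the dependence on $s$ is handled. The paper fixes $t,s\in\bS^1$ at the outset, covers the target circle by sets $V_i$ of diameter less than $\eps/2$, and chooses times $j_i$ at which the orbit of $(x_0,s)$ visits $W\times V_i$; as literally written, the resulting $K$ therefore depends on $s$, and the claimed uniformity over all $s$ is left implicit (it can be recovered from the rotation-equivariance, or by a further compactness argument in $s$). You instead observe that, because $R_\alpha$ is a rotation, the circle coordinate enters only through the difference $u=t-s$, reduce everything to the single orbit of $(x_0,0)$, and cover the circle of differences; this yields a $K$ that is manifestly uniform in both $s$ and $t$, which is exactly the quantifier order the lemma asserts and which the subsequent Lemma \ref{lem:TxRpq} relies on. Your approach thus buys a cleaner justification of the uniformity at the cost of using the group structure of the rotation (which is of course available here); the paper's version is marginally more general in that it never uses that $R_\alpha$ is a rotation, only minimality of the product. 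Your $\eps/8$ bookkeeping is consistent under the max metric the paper adopts on $X\times\bS^1$, so no gap remains.
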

\begin{proof}
	Fix $\eps>0$ and take any $t,s \in \bS^1$. Let $\{V_i\}_{i=1}^N$ be a finite open cover of $\bS^1$ with sets of diameter less than $\frac{\eps}{2}$
 and let $W$ be a neighborhood of $x_0$ with $\diam W<\frac{\eps}{2}$ (for simplicity we assume that in $X\times \bS^1$ we have metric maximum). Denote $U_i=W\times V_i$. By minimality of $T\times R_{\alpha}$ for each $i=1,\dots,N$  there exist $K_i$ and  some $j_i<K_i$ such that $(T\times R_{\alpha})^{j_i}(x_0,s) \in U_i$. Let $K = \max\{K_i: i=1,\dots, N\}$ and by $U \in \{U_i\}_{i=1}^N$ denote the set containing $(x_0,t)$. Then for some $j \in \{j_1,\dots, j_N\}$ we have $d\left((x_0,t), (T\times R_{\alpha})^j(x_0,s)\right)<\frac{\eps}{2}$.
\end{proof}

\begin{lem}\label{lem:TxRpq}
Fix any $x_0\in X$. For all $\eps>0$, $\frac{p}{q} \in \mathbb{Q}$, any $m \in \mathbb{N}$ and $\delta>0$ such that $\delta<\frac{1}{m}$ there exists a rational number $\frac{p'}{q'} \in \mathbb{Q}\cap [1-\delta,1+\delta]$ with $q'>m$ and an integer $K >0$ with the property that:
$$	d\left((x_0,t), (T\times R_{\frac{p}{q}\cdot \frac{p'}{q'}})^j(x_0,s)\right)< \frac\eps2.
	$$
for every $t, s \in X\times \bS^1$ and some $j=j(s,t)<K$.
\end{lem}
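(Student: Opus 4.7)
The plan is to reduce to the irrational case handled by Lemma~\ref{lem:circle}. Since $(p/q)(p'/q')$ is rational, the rotation $R_{(p/q)(p'/q')}$ is periodic on $\bS^1$ and $T\times R_{(p/q)(p'/q')}$ cannot be minimal, so one must instead obtain a uniform hitting time $K$ by approximating a genuinely minimal, irrational rotation system and then bounding the drift between the two systems for the first $K$ iterates via Lemma~\ref{lem: FG}.

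First I would apply Remark~\ref{rem:alpha}: since multiplication by the nonzero rational $p/q$ is absolutely continuous, the set of $\alpha\in\mathbb{R}\setminus\mathbb{Q}$ for which $T\times R_{(p/q)\alpha}$ is minimal has full Lebesgue measure in $\mathbb{R}$, hence meets the open interval $(1-\delta,1+\delta)$. Fix such an irrational $\alpha$. Applying Lemma~\ref{lem:circle} with $\eps/2$ in place of $\eps$ to the minimal system $(X\times\bS^1,T\times R_{(p/q)\alpha})$ then yields an integer $K>0$ such that for every $t,s\in\bS^1$ there is some $j<K$ with
\[
d\bigl((x_0,t),(T\times R_{(p/q)\alpha})^j(x_0,s)\bigr)<\frac{\eps}{4}.
\]

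Next I would invoke Lemma~\ref{lem: FG} on the compact metric space $\bS^1$ with $F=R_{(p/q)\alpha}$, target accuracy $\eps/4$, and the $K$ just produced, obtaining $\gamma>0$ such that any continuous $G\colon\bS^1\to\bS^1$ with $\rho(F,G)<\gamma$ satisfies $\rho(F^j,G^j)<\eps/4$ for all $0\le j\le K$. Since rationals with arbitrarily large denominator are dense in $[1-\delta,1+\delta]$, I can choose $p'/q'\in\mathbb{Q}\cap[1-\delta,1+\delta]$ with $q'>m$ and $|(p/q)(\alpha-p'/q')|<\gamma$; then $G:=R_{(p/q)(p'/q')}$ satisfies $\rho(F,G)<\gamma$ and hence $\rho(F^j,G^j)<\eps/4$ for $0\le j\le K$.

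To conclude, given any $t,s\in\bS^1$ and the $j<K$ supplied above, the first coordinate $T^jx_0$ is identical under both products, so the triangle inequality (with the max metric on $X\times\bS^1$) gives
\[
d\bigl((x_0,t),(T\times R_{(p/q)(p'/q')})^j(x_0,s)\bigr)<\frac{\eps}{4}+\frac{\eps}{4}=\frac{\eps}{2}.
\]
The main conceptual obstacle is the tension between the need for orbit density on the circle factor and the HAK construction's insistence on rational (hence periodic) rotations; the content of the lemma is that, within a uniformly bounded window of iterates, a sufficiently fine rational approximation of a minimality-compatible irrational angle suffices to inherit the transitive behavior provided by Lemma~\ref{lem:circle}.
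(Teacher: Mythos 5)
Your proof is correct and follows essentially the same route as the paper: pick a minimality-compatible irrational $\alpha$ near $1$ via Remark~\ref{rem:alpha}, extract the uniform hitting time $K$ from Lemma~\ref{lem:circle}, and then use Lemma~\ref{lem: FG} to replace $\alpha$ by a nearby rational $p'/q'$ with $q'>m$ while controlling the first $K$ iterates. Your variant of applying Lemma~\ref{lem: FG} only on the compact factor $\bS^1$ (where the two maps actually differ) is a slight tidying-up, since $X$ itself need not be compact, but the argument is otherwise the same.
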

\begin{proof}
	Fix $\eps>0$, $\frac{p}{q} \in \mathbb{Q}$, some arbitrarily large $m \in \mathbb{N}$,  and $0<\delta<\frac{1}{m}$. 
	Choose some irrational $\alpha \in (1-\delta,1+\delta)$ (such an $\alpha$ exists by Remark~\ref{rem:alpha}) such that $T\times R_{\frac{p}{q}\cdot\alpha}:X\times \bS^1\to X\times \bS^1$ is minimal.
	Let $K$ be given by Lemma \ref{lem:circle} for $\frac{\eps}{3}$ and the map $T\times R_{\frac{p}{q}\cdot\alpha}$. By Lemma \ref{lem: FG} for $T\times R_{\frac{p}{q}}$, the chosen $K$ and $\frac{\eps}{3}$ there exists $\gamma>0$ such that for $\beta \in (1-\delta,1+\delta)$ we have $\rho((T\times R_{\frac{p}{q}\cdot \alpha})^j, (T\times R_{\frac{p}{q}\cdot \beta})^j)<\frac{\eps}{3}$ for $j=0,\dots,K$ whenever $\rho((T\times R_{\frac{p}{q}\cdot \alpha}), (T\times R_{\frac{p}{q}\cdot \beta}))<\gamma$. Now choose $\delta'<\delta$ such that for any $\beta \in (\alpha-\delta',\alpha+\delta')$ we have $\rho((T\times R_{\frac{p}{q}\cdot \alpha}), (T\times R_{\frac{p}{q}\cdot \beta}))<\gamma$ and $(\alpha-\delta',\alpha+\delta') \subset (1-\delta, 1+\delta)$.

Take any rational number $\frac{p'}{q'} \in \mathbb{Q}$ with the property that:
	$$
	\left| \frac{p'}{q'}-\alpha\right|<\delta'.
	$$
Note that $\frac{p'}{q'} \in (1-\delta,1+\delta)$ and clearly
	$$
	\frac{1}{q'}\leq \left| \frac{p'}{q'}-1\right|<\delta<\frac{1}{m},
	$$
hence $q'>m$.
	Furthermore, we have:
	\begin{multline*}
	d\left((x_0,t),(T\times R_{\frac{p}{q}\cdot\frac{p'}{q'}})^j(x_0,s)\right)\leq d\left((x_0,t),(T\times R_{\frac{p}{q}\cdot\alpha})^j(x_0,s), )\right)\\ 
	+ d\left((T\times R_{\frac{p}{q}\cdot\alpha})^j(x_0,s), (T\times R_{\frac{p}{q}\cdot\frac{p'}{q'}})^j(x_0,s)\right)
	 < \frac{\eps}{6}+\frac{\eps}{3}= \frac\eps2.
\end{multline*}
The proof is complete.
 \end{proof}

In the proof of the theorem presented below we will use the approximation scheme of HAK homeomorphism $H:\A\to\A$ based on the one presented in section \ref{sec2}.

\begin{thm}\label{thm:pseudocirc}
Fix any $x_0\in X$. There is a nested sequence of annuli $\{A_n\}_{n\geq 0}\subset\A$ and a homeomorphism $H:\mathbb{A}\to\mathbb{A}$ such that $\mathcal{C} = \bigcap_{n\geq 0}A_n\subset \Int \mathbb{A}$ is an attracting pseudo-circle and for any $n> 0$ there exists $K_n >0$ such that for every $t,s \in \A$ there is $j_n<K_n$ such that:
$$
d\left((x_0,t), (T\times H)^{j_n}(x_0,s)\right)<\frac1n.
$$
\end{thm}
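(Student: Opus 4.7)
The plan is to carry out the HAK construction from Section \ref{sec2} inductively, with two additional ingredients built into each stage $n$: (i) Handel-style crookedness in the embeddings $f_n\colon A_n\to\A$, which forces the attractor $\mathcal{C}=\bigcap_n A_n$ to be hereditarily indecomposable and, being a cofrontier in $\A$, to be the pseudo-circle (following \cite{Hn},\cite{BCO}); and (ii) a rotation number $r_n$ for $R_n$ supplied by Lemma \ref{lem:TxRpq}, chosen so that $T\times R_{r_n}$ on $X\times\bS^1$ already exhibits a quantitative recurrence, which will then be transferred to $T\times H$ via the HAK conjugation and the uniform limit.

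Concretely, assume at stage $n-1$ we have $H_{n-1}$, annuli $A_0\supset\dots\supset A_{n-1}$, embeddings $f_1,\dots,f_{n-1}$ and rational rotation numbers $r_1,\dots,r_{n-1}$. Apply Lemma \ref{lem:TxRpq} to $p/q=r_{n-1}$ with accuracy slightly less than $\tfrac{1}{2n}$ and $m$ very large, obtaining $p'/q'\in\mathbb{Q}\cap[1-\delta,1+\delta]$, a new rotation number $r_n=r_{n-1}\cdot p'/q'$ of large denominator $q_n$, and an integer $K_n$ so that for every $t',s'\in\bS^1$ some $j<K_n$ satisfies
\[
d\bigl((x_0,t'),(T\times R_{r_n})^j(x_0,s')\bigr)<\tfrac{1}{2n}.
\]
Now perform the $n$-th HAK step with $R_n=R_{r_n}$: choose $A_n\subset\Int A_{n-1}$ with the shell $A_{n-1}\setminus A_n$ of diameter $<\tfrac{1}{2n}$, pick a sufficiently crooked embedding $f_n\colon A_n\to\A$ whose image lies in a strip of first-coordinate width $\eps_n/2$ (so that uniform continuity of $f_n^{-1}$ controls $\A$-distances from $\bS^1$-distances), and define $g_n$ with $g_n|_{A_n}=f_n^{-1}\circ R_n\circ f_n$ and $g_n=\id$ off $A_{n-1}$. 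Using Lemma \ref{lem: FG}, take $\eps_n>0$ small enough that $\rho(H_n^i,H_{n-1}^i)<2^{-n}$ for all $i\leq K_n$, while keeping $\sum_n\eps_n$ convergent. The standard HAK argument then gives uniform convergence $H_n\to H$, $H$-invariance of each $A_n$, and an attracting pseudo-circle $\mathcal{C}$.

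For the recurrence estimate, fix $n$ and $t,s\in\A$; using the thinness of the shells to reduce to the case $t,s\in A_n$, note that $H_n|_{A_n}=f_n^{-1}\circ R_n\circ f_n$, so Lemma \ref{lem:TxRpq} applied to $\pi_2 f_n(t),\pi_2 f_n(s)$ produces $j<K_n$ with circle distance $<\tfrac{1}{2n}$. Combining the $\eps_n/2$ control of the first coordinate with the uniform continuity of $f_n^{-1}$ upgrades this to $d((x_0,t),(T\times H_n)^j(x_0,s))<\tfrac{1}{n}-2^{-n}$, and the uniform closeness $\rho((T\times H)^i,(T\times H_n)^i)<2^{-n}$ for $i\leq K_n$ delivers the claimed bound. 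The main obstacle is the interlocking of parameters: the integer $K_n$ returned by Lemma \ref{lem:TxRpq} depends on the full previous history, while $\eps_n$, $q_n$ and the crookedness parameters at stage $n$ must be fixed \emph{after} $K_n$ is known so that the HAK scheme remains accurate over $K_n$ iterates and the hereditary indecomposability of the limit is preserved; a secondary subtlety is extending the recurrence from $\mathcal{C}$ (or $A_n$) to all of $\A$, addressed by arranging each shell $A_{k-1}\setminus A_k$ for $k\leq n$ to have diameter below $1/n$ so that layers are mutually $1/n$-close.
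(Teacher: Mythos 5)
Your overall architecture is the same as the paper's: run the HAK construction, feed Lemma~\ref{lem:TxRpq} into each stage to choose the rational rotation number $r_n=r_{n-1}\cdot p'_n/q'_n$ and the horizon $K_n$, and use Lemma~\ref{lem: FG} to push the finite-time recurrence of $T\times H_n$ to the uniform limit $T\times H$. However, there is a genuine gap at the central step, namely your claim that $H_n|_{A_n}=f_n^{-1}\circ R_n\circ f_n$ with $R_n=R_{r_n}$. Under your own definitions, $H_n=g_n\circ\dots\circ g_1$ with $g_k|_{A_k}=f_k^{-1}\circ R_{r_k}\circ f_k$, so on $A_n$ the map $H_n$ is a composition of $n$ rotations conjugated by \emph{different} homeomorphisms $f_1,\dots,f_n$; even if all $f_k$ were the identity this composition would be $R_{r_1+\dots+r_n}$, not $R_{r_n}$. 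Since the whole point is to transfer the recurrence of $T\times R_{r_n}$ (supplied by Lemma~\ref{lem:TxRpq}) to $T\times H_n$, this identity is exactly what must be engineered, and it does not come for free. The paper does this by (i) rotating at stage $n$ only by the \emph{increment} $\nu_n=\tfrac{p_n}{q_n}-\tfrac{p_{n-1}}{q_{n-1}}$ rather than by the full angle, so that the angles telescope to $\tfrac{p_n}{q_n}$; (ii) taking $f_n=\varphi_n\circ f_{n-1}$ where $\varphi_n$ commutes with $\id_{[0,1]}\times R_{p_{n-1}/q_{n-1}}$ on $A_n$, so that the conjugacies of the earlier stages pass through the new coordinate change; and (iii) choosing $A_n$ invariant (in the $f_{n-1}$-coordinates) under $\id_{[0,1]}\times R_{p_{n-1}/q_{n-1}}$ so that this commutation can be arranged. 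Only then does the inductive computation $f_n(H_n^j(s))=(\id_{[0,1]}\times R_{p_n/q_n})^j(f_n(s))$ go through. Without these compatibility conditions your recurrence estimate for $H_n$ on $A_n$ is unjustified.

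Two secondary points. First, your parameter bookkeeping controls $\rho(H_n^i,H_{n-1}^i)$ for $i\le K_n$, i.e.\ closeness to the \emph{previous} stage, whereas what is needed is that all \emph{later} perturbations stay small relative to a modulus $\gamma_n$ (from Lemma~\ref{lem: FG}, depending on $H_n$ and $K_n$), so that $\rho(H_n^i,H^i)<\eps_n/2$ for $i\le K_n$; the paper arranges this by requiring $\eta_m\le \min\{\gamma_1,\dots,\gamma_{m-1}\}/2^{m+1}$ at every later stage $m$. This is fixable but your ordering as written does not deliver the bound $\rho((T\times H)^i,(T\times H_n)^i)<2^{-n}$ that you invoke. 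Second, the reduction of the estimate from $t,s\in A_n$ to all of $\A$ via ``thin shells'' cannot work for the first shell $\A\setminus A_1$, which has definite size; this point is also glossed over in the paper, and is harmless for the application (minimality on $X\times\mathcal{C}$ only needs $t,s\in\mathcal{C}$), but you should not claim it follows from shell thinness.
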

\begin{proof}
	Choose an arbitrarily large $m_1 \in \mathbb{N}$ such that $\xi_1 = \frac{1}{m_1}<\frac14$, take any $p \in \mathbb{N}$ and define the annulus $\hat{\A} = [\frac14,\frac34]\times \mathbb{R}/\mathbb{Z}\subset \A$. For any $i=1,\dots,m_1$ let $P^1_i = \left[\frac14,\frac34\right]\times \left[\frac{i-1}{m_1}, \frac{i}{m_1}\right]$ and $\mathcal{P}_1 = \{P^1_1,\dots, P^1_{m_1}\}$. Take an annulus $A_1\subset \A$ 
 and a~homeomorphism $f_1:\A\to \A$ with $f(A_1) = \hat{\A}$ such that if we denote 
 $D^1_i = f^{-1}_1(P^1_i)$, then 
 $A_1$ is thin enough so that $\diam (D^1_i)<\frac14$ for $i=1,\dots,m_1$.

	Fix $\eps_1<\frac14$. Let $\frac{p'_1}{q'_1}$, $K_1$ and $j_1<K_1$ be obtained by Lemma \ref{lem:TxRpq} for $m_1, \eps_1$, $\frac{p}{m_1}$ and some $\delta_1<\frac{1}{m_1}$. Take $\frac{p_1}{q_1} = \frac{pp'_1}{m_1q'_1}$. 
	
	 Define an annulus homeomorphism $g_1:\A\to\A$ as follows: 
	 $$g_1 = f_1^{-1}\circ \left(\id_{[0,1]}\times R_{\frac{p_1}{q_1}}\right)\circ f_1.$$
	Let $H_1 = g_1$. 
Observe that for any $j\geq 0$ and $s \in A_1$ we have:
$$
f_1(H_1^j(s)) = \left(\id_{[0,1]}\times R_{\frac{p_1}{q_1}}\right)^j(f_1(s)).
$$
Fix $t,s\in A_1$.
Denote $\hat{t} = f_1(t)$ and $\hat{s}= f_1(s)$. There exists an $i \in \{1,\dots, m_1\}$ such that $\hat{s}, \left(\frac14,\pi_2(\hat{s})\right)\in P^1_i$ and  for $j_1<K_1$ we have:
	 $$
	 d\left((x_0,\pi_2(\hat{t})),\left(T\times R_{\frac{p_1}{q_1}}\right)^{j_1}( x_0,\pi_2(\hat{s}))\right)<\frac{\eps_1}{2}.
	 $$
	 It follows that $\left(\frac14,\pi_2(\hat{t})\right)$ and $\left(\frac14, R_{\frac{p_1}{q_1}}^{j_1}(\pi_2(\hat{s}))\right)$ are in the same or neighbouring regions of the partition $\mathcal{P}_1$, which means that $\hat{t}$ and $f_1((H_1^{j_1})(s))$ are in the same or neighbouring regions of partition $\mathcal{P}_1$ as well. It follows that $d(t, H_1^{j_1}(s))<\frac12$. Hence:
	 $$
	 d\left((x_0,t),(T\times H_1)^{j_1}( x_0,s)\right)<\frac12+\eps_1<\frac34. 
	 $$
	 
	 Let $\gamma_1$ be given by Lemma \ref{lem: FG} for $H_1$ and $\frac{\eps_1}{2}$. 
	 Take $\eta_1 \leq \frac{\gamma_1}{4}$.

For any $n> 1$ we proceed with the construction inductively as follows. Suppose we have already defined $A_{n-1}$, $f_{n-1}$, $g_{n-1}$, $H_{n-1}$, $\frac{p_{n-1}}{q_{n-1}}$, $m_{n-1}$, $\eps_{n-1}$. Let $\xi_n>0$ be such that if $\diam(Z)<\xi_n$ then $\diam \left(f_{n-1}^{-1}(Z)\right)<\frac{1}{4n}$ for any $Z\subset A_{n-1}$. Consider $\frac{p_{n-1}}{q_{n-1}}$ from the previous step.  Fix an annulus $A_n\subset \Int A_{n-1}$ by taking some curve in $A_{n-1}$ which is invariant under $\left(\id_{[0,1]} \times R_{\frac{p_{n-1}}{q_{n-1}}}\right)$ and thickening it to an annulus $A_{n}$, which is  invariant under $\left(\id_{[0,1]} \times R_{\frac{p_{n-1}}{q_{n-1}}}\right)$ as well. Define a~homeomorphism $\varphi_n: \A\to \A$ starting with 
a homeomorphism $\varphi_n:A_n\to\hat{\A}$ satisfying
$$
\varphi_n\circ \left. \left(\id_{[0,1]} \times R_{\frac{p_{n-1}}{q_{n-1}}}\right)\right|_{A_n} = \left(\id_{[0,1]} \times R_{\frac{p_{n-1}}{q_{n-1}}}\right)\circ \left.\varphi_n\right|_{A_n}
$$
and extending it to the whole $\A$. Let $f_n:\A\to \A$ be as follows:
$$
f_n = \varphi_n\circ f_{n-1}.
$$
This way $f_n$ is commuting with $\id_{[0,1]} \times R_{\frac{p_{n-1}}{q_{n-1}}}$ on $A_n$.
We also require that the annulus $A_n$ is thin enough so that there exists some $\beta_n>0$ such that if we take a~region $\left[\frac14,\frac34\right]\times[x,x+\beta_n]\subset \hat{\A}$ then: $$\diam\left(\varphi_n^{-1}\left(\left[\frac14,\frac34\right]\times[x,x+\beta_n]\right)\right) <\xi_n$$
 for any $x \in \mathbb{R}/\mathbb{Z}$. 

Fix some $\eps_n<\frac{1}{4n}$. Take some sufficiently large $\alpha_n \in \mathbb{N}$ so that for $m_n = m_{n-1}\alpha_n > m_{n-1}q'_{n-1}$ we have $\frac{1}{m_n}<\beta_n$. Now take $K_n, j_n<K_n$, $\frac{p'_n}{q'_n}$ obtained by Lemma \ref{lem:TxRpq} for $\eps_n$, $m_n$, some $\delta_n<\frac{1}{m_n}$ and $\frac{p_{n-1}}{q_{n-1}}$. Take $\frac{p_n}{q_n} = \frac{p_{n-1}}{q_{n-1}}\cdot\frac{p'_n}{q'_n}$ putting $p_n = p_{n-1}p'_n$ and $q_n = q_{n-1}q'_n$ and denote $\nu_n = \frac{p'_n-q'_n}{q'_n}\cdot \frac{p_{n-1}}{q_{n-1}}$. Let $P^n_i = \left[\frac14,\frac34\right]\times \left[\frac{i-1}{q_n}, \frac{i}{q_n}\right]$, then $\mathcal{P}_n = \{P^n_1,\dots, P^n_{q_n}\}$  is a partition of $\A$ into $q_n$ rectangles with the shorter sides of length $\frac{1}{q_n}<\frac{1}{m_n}<\beta_n$. 
Let $D^n_i = f^{-1}_n(P^n_i)$. 

In particular for any $s,t\in A_n$
there is $j<K_n$ for which by Lemma \ref{lem:TxRpq} we have:
$$
d\left(( x_0,\pi_2(t)),(T\times R_{\frac{p_n}{q_n}})^{j}( x_0,\pi_2(s))\right)<\frac{\eps_n}{2}.
$$
Let $g_n:\A\to\A$ be the homeomorphism defined as follows. Start with: 
$$g_n|_{A_n} = f_n^{-1}\circ \left(\id_{[0,1]}\times R_{\nu_n}\right)\circ f_n$$
 and extend it to a homeomorphism on the whole $\A$ which is the identity on the boundary. Let $H_n = g_n\circ g_{n-1}\circ\dots\circ g_1$. In particular $H_n|_{A_n} = g_n\circ H_{n-1}|_{A_n}$.

For any $1\leq j \leq K_n$ and $s \in A_n$  
we have the following, based on the definition of $g_n$ and the inductive assumption for $n-1$:
\begin{align*}
	f_n(H_n^j(s)) &= f_n\circ (g_n\circ H_{n-1})^j(s) = \left(\id_{[0,1]}\times R_{\nu_n}\right)\circ f_n\circ H_{n-1}\circ( g_n\circ H_{n-1})^{j-1}(s) \\
	&= (\id_{[0,1]}\times R_{\nu_n})\circ f_n\circ f_{n-1}^{-1}\circ (\id_{[0,1]}\times R_{\frac{p_{n-1}}{q_{n-1}}})\circ f_{n-1}\circ(g_n\circ H_{n-1})^{j-1}(s) \\
	&= (\id_{[0,1]}\times R_{\nu_n})\circ\varphi_n \circ (\id_{[0,1]}\times R_{\frac{p_{n-1}}{q_{n-1}}})\circ f_{n-1}\circ(g_n\circ H_{n-1})^{j-1}(s) \\
	&= (\id_{[0,1]}\times R_{\nu_n})\circ (\id_{[0,1]}\times R_{\frac{p_{n-1}}{q_{n-1}}})\circ \varphi_n\circ f_{n-1}\circ(g_n\circ H_{n-1})^{j-1}(s) \\
	&= (\id_{[0,1]}\times R_{\frac{p_n}{q_n}})\circ \varphi_n \circ f_{n-1}\circ(g_n\circ H_{n-1})^{j-1}(s) \\
	&= (\id_{[0,1]}\times R_{\frac{p_n}{q_n}})\circ f_n\circ(g_n\circ H_{n-1})^{j-1}(s)\\
	&= (\id_{[0,1]}\times R_{\frac{p_n}{q_n}})\circ f_n\circ f_{n-1}^{-1}\circ (\id_{[0,1]}\times R_{\frac{p_{n-1}}{q_{n-1}}})\circ f_{n-1} \circ (g_n\circ H_{n-1})^{j-2}(s)\\
	& = \dots = (\id_{[0,1]}\times R_{\frac{p_n}{q_n}})^j\circ f_n(s) = (\id_{[0,1]}\times R_{\frac{p_n}{q_n}})^j(f_n(s)).
\end{align*}

Fix $s,t\in A_n$. Denote $\hat{t} = f_n(t)$ and $\hat{s} = f_n(s)$. Then $\hat{s}$ and $(\frac14,\pi_2(\hat{s}))$ are both in the same of $q_n$ regions of $f_n^{-1}(\hat{\A})$. 
For $j_n<K_n$ we have:
$$
d\left(( x_0,\pi_2(\hat{t})), (T\times R_{\frac{p_n}{q_n}})^{j_n}( x_0,\pi_2(\hat{s}))\right)<\frac{\eps_n}{2},
$$
hence $(\frac 14,\pi_2(\hat{t}))$ and $(T\times R_{\frac{p_n}{q_n}})^{j_n}(y,\pi_2(\hat{s}))$ are in the same or neighbouring regions of partition $\mathcal{P}_n$, which implies that $\hat{t}$ and $f_n(H_n^{j_n}(s))$ are in neighbouring regions of the partition $\mathcal{P}_n$ as well. Hence we have $d(t,H_n^{j_n}(s))<\frac{1}{2n}$ and it follows that:
$$
d(( x_0,t), (T\times H_n)^{j_n}( x_0,s))<\frac{1}{2n}+\frac{\eps_n}{2}<\frac{1}{2n}+\eps_n<\frac{3}{4n}.
$$

Let $\gamma_n$ be given by Lemma \ref{lem: FG} such that whenever $\rho(H_n,  \id_{[0,1]}\times R_{\frac{p_n}{q_n}})\leq \gamma_n$ then $\rho(H_n^i,  \id_{[0,1]}\times R^i_{\frac{p_n}{q_n} } )\leq \frac{\eps_n}{2}$ for $i=0,\dots,K_0$. Take $\eta_n \leq \frac{\min\{\gamma_1,\dots, \gamma_{n-1}\}}{2^{n+1}}$.

Note that $\{H_n\}_{n\geq 1}$ is a Cauchy sequence, as for any $n \geq 1$ and $s>n$ we have:
$$
\rho(H_n^i, H_{n+s}^i)\leq \sum_{j=n}^{n+s-1}\rho(H_j^i,H_{j+1}^i)\leq 2^{-n}. 
$$
Similarly one shows that the sequence $\{H^{-1}_n\}_{n \geq 1}$ is a Cauchy sequence as well. Therefore, 
the map $H = \lim_{n\to\infty}H_n$ is a well defined homeomorphism of $\A$. Note that the pseudo-circle $\mathcal{C} = \bigcap_{n\geq 1}A_n\subset \A$ is $H$-invariant. By the results of Handel in \cite{Hn} we have that $H$ is minimal and uniformly rigid.
 As $\rho(H_n, H_{n+1})<\frac{\gamma_n}{2^{n+1}}$ we obtain the following estimate for every $n\geq 1$:
$$
\rho(H_n,H)<\sum_{j=n}^{\infty}\frac{\gamma_n}{2^{j+1}}	\leq \frac{\gamma_n}{2^n}<\gamma_n, 
$$
and by Lemma \ref{lem: FG} for $i=0,\dots, K_n$ we have:
$$
\rho(H_n^i, H^i)<\frac{\eps_n}{2}.
$$
Finally take any $t,s\in \A$ and note that:
\begin{align*}
d(( x_0,t), (T\times H)^{j_n}( x_0,s))& \leq d(( x_0,t), (T\times H_n)^{j_n}( x_0,s))\\
&\qquad + \rho((T\times H_n)^{j_n}( x_0,s),(T\times H)^{j_n}( x_0,s))\\
&\qquad\qquad \leq \frac{3}{4n} + \frac{\eps}{2} <\frac1n.
\end{align*}
 The proof is finished.
\end{proof}

\begin{cor}
	For every minimal system $(X,T)$ there is an embedding of pseudo-circle $\mathcal{C}$ in annulus $\mathbb{A}$ and the homeomorphism $H: \mathbb{A}\to\mathbb{A}$ such that $H(\mathcal{C}) = \mathcal{C}$, $(\mathcal{C},H|_{\mathcal{C}})$ is minimal and $(X\times \mathcal{C}, T\times H|_{\mathcal{C}})$ is minimal.  
\end{cor}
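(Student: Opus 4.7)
The plan is to read the corollary off directly from Theorem~\ref{thm:pseudocirc}. First I would fix an arbitrary $x_0\in X$ and apply the theorem to produce the nested annuli $\{A_n\}_{n\ge 0}$ and the homeomorphism $H\colon\mathbb{A}\to\mathbb{A}$, setting $\mathcal{C}:=\bigcap_{n\ge 0}A_n$. The facts that $\mathcal{C}$ is a pseudo-circle lying in $\Int\mathbb{A}$ and is $H$-invariant, so that $H(\mathcal{C})=\mathcal{C}$, come for free from the construction (each $A_n$ is $H$-invariant, hence so is the intersection), and the minimality of $(\mathcal{C},H|_{\mathcal{C}})$ was already cited from Handel within the proof of Theorem~\ref{thm:pseudocirc}. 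What remains is to verify that $(X\times\mathcal{C},T\times H|_{\mathcal{C}})$ is minimal; I would do this by showing that every forward orbit is dense in $X\times\mathcal{C}$.

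To that end I would first dispatch base points of the form $(x_0,s)$ with $s\in\mathcal{C}$. Theorem~\ref{thm:pseudocirc} guarantees that for every $t\in\mathcal{C}$ and every $n\ge 1$ some iterate of $(x_0,s)$ under $T\times H$ lies within $1/n$ of $(x_0,t)$, so $\{x_0\}\times\mathcal{C}\subseteq\overline{O(x_0,s)}$, where $O(\cdot)$ denotes forward orbit. Forward invariance of the orbit closure then places $(T^n x_0,H^n t)$ in $\overline{O(x_0,s)}$ for every $n\ge 0$ and every $t\in\mathcal{C}$. Given an arbitrary target $(x,t)\in X\times\mathcal{C}$, I would use minimality of $(X,T)$ to choose $n_k$ with $T^{n_k}x_0\to x$, and set $t_k:=H^{-n_k}(t)\in\mathcal{C}$, which is legitimate because $H(\mathcal{C})=\mathcal{C}$. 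Then $(T^{n_k}x_0,H^{n_k}t_k)=(T^{n_k}x_0,t)\to(x,t)$, so $(x,t)\in\overline{O(x_0,s)}$, and hence $\overline{O(x_0,s)}=X\times\mathcal{C}$.

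For a general starting point $(a,b)\in X\times\mathcal{C}$, minimality of $(X,T)$ supplies $n_k$ with $T^{n_k}a\to x_0$, and compactness of $\mathcal{C}$ lets me pass to a subsequence so that $H^{n_k}b$ converges to some $t_0\in\mathcal{C}$; thus $(x_0,t_0)\in\overline{O(a,b)}$, and forward invariance of the orbit closure combined with the previous step yields $\overline{O(a,b)}\supseteq\overline{O(x_0,t_0)}=X\times\mathcal{C}$. The only genuinely nontrivial step, and the one I expect to be the main (mild) obstacle, is this passage from the $x_0$-specific, section-type density supplied by Theorem~\ref{thm:pseudocirc} to density for arbitrary starting points; the backward-iteration trick $t_k:=H^{-n_k}(t)$, together with the compactness argument at the start of this paragraph, is precisely what bridges the gap.
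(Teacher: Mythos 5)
Your proposal is correct and follows essentially the same route as the paper, which proves this statement (inside the proof of Theorem~\ref{main}) by the same bootstrap: Theorem~\ref{thm:pseudocirc} gives density of orbits in the fibre $\{x_0\}\times\mathcal{C}$, minimality of $T$ upgrades this to $X\times\mathcal{C}$, and a return-to-$x_0$ argument handles arbitrary starting points. The only cosmetic difference is that you phrase everything with forward orbit closures and an explicit $H^{-n_k}$ trick, while the paper uses $\omega$-limit sets and their invariance; both are equivalent here.
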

Now, Theorem \ref{main} is an immediate corollary as shown below.
	\begin{proof}[Proof of Theorem \ref{main}]
		The proof is an consequence of Theorem \ref{thm:pseudocirc} and the fact that the pseudo-circle is topologically unique (see \cite{LF}). 
  Namely, it is enough to repeat a standard argument for disjointness (see \cite[Proposition~1]{KST}). Let us fix any $x_0\in X$. Then Theorem \ref{thm:pseudocirc} implies that there is a map $H$ on the pseudocircle $\mathcal{C}$ such that $\omega_{T\times H}(x_0,y)\supset \{x_0\}\times \mathcal{C}$ for every $y\in \mathcal{C}$. But $\omega_{T\times H}(x_0,y)$ is closed, invariant, and $x_0$ has dense orbit in $X$ under $T$, hence $\omega_{T\times H}(x_0,y)=X\times \mathcal{C}$ for any $y\in \mathcal{C}$.
  To complete the proof, observe that by minimality of $T$, for any $(x,t)\in X\times \mathcal{C}$ there is $s\in \mathcal{C}$ such that $(x_0,s)\in \omega_{T\times H}(x,t)$, which implies
  that $X\times \mathcal{C}=\omega_{T\times H}(x_0,s)\subset \omega_{T\times H}(x,t)$.
	\end{proof}

Note that it is still an open question whether all real numbers can be realized as  rotation numbers of minimal pseudo-circle homeomorphisms and, with the exception for integer multiples of the same number, if two such numbers can be realized on the same embedding (see \cite{BCO} for more details). This motivates the following question.
\begin{que}
	Is there an embedding $\mathcal{C}$ of the pseudo-circle into annulus $\A$ such that for any minimal system $(X,T)$ there is a homeomorphism $H:\A\to \A$ such that $H(\mathcal{C}) = \mathcal{C}$, $(\mathcal{C},H)$ is minimal and $(X\times \mathcal{C}, T\times H|_{\mathcal{C}})$ is minimal?  
\end{que} 
Note also that if we denote by $C_n$ the boundary circle of each annulus $A_n$ from the construction in the proof of Theorem \ref{thm:pseudocirc}, then $\{C_n\}_{n\in \mathbb{N}}$ converges in Hausdorff metric to the pseudo-circle $\mathcal{C}$. In particular each $C_n$ is homeo-product-minimal and so is the limit $\mathcal{C}$. On the other hand, in plain it is easy to obtain arc as the limit of converging sequence of circles. This leads to the following question:
\begin{que}
Take any $m \in \mathbb{N}$ and a sequence   $\{Y_n\}_{n\in \mathbb{N}}$ of homeo-product-minimal spaces, where $Y_n \subset \mathbb{R}^m$ for $n \in \mathbb{N}$. Let $Y$ be the limit of $Y_n$ in the Hausdorff metric. Under what conditions $Y$ is a homeo-product-minimal space?
\end{que}

\section*{Acknowledgements}
J. P. Boroński and M. Fory\'s-Krawiec were supported in part by the National Science Centre, Poland (NCN), grant no. 2019/34/E/ST1/00237. P. Oprocha was supported by National Science Centre, Poland (NCN), grant no. 2019/35/B/ST1/02239.

\bibliographystyle{amsplain}

\end{document}